\theoremstyle{plain}
\newtheorem{theorem}{Theorem}[section]
\newtheorem{corollary}[theorem]{Corollary}
\newtheorem{lemma}[theorem]{Lemma}
\newtheorem{proposition}[theorem]{Proposition}
\theoremstyle{definition}
\newtheorem{definition}[theorem]{Definition}
\newtheorem{remark}[theorem]{Remark}
\newcommand{\biss}{{\sf BiS}s} \newcommand{\bis}{{\sf BiS}}
\newcommand{\viet}{{\mathcal V}} \newcommand{\im}{{\sf Im}}
\newcommand{\cl}{{\rm \it{Clop}}}
\newcommand{\cP}{\mathcal P} \newcommand{\cB}{\mathcal B}
\newcommand{\cC}{\mathcal C}
\newcommand{\just}[2]{\stackrel{#2}{#1}}
\begin{document}
\title{A note on powers of Boolean spaces with internal
  semigroups\footnote{This project has received funding from the
    European Research Council (ERC) under the European Union's Horizon
    2020 research and innovation program (grant agreement
    No. 670624).}}  \author{C\'elia Borlido, Mai Gehrke}\date{}
\maketitle

\begin{abstract}
  Boolean spaces with internal semigroups generalize profinite
  semigroups and are pertinent for the recognition of not-necessarily
  regular languages. Via recognition, the study of existential
  quantification in logic on words amounts to the study of certain
  spans of Boolean spaces with internal semigroups. In turn, these can
  be understood as the superposition of a span of Boolean spaces and a
  span of semigroups. In this note, we first study these
  separately. More precisely, we identify the conditions under which
  each of these spans gives rise to a morphism into the respective
  power or Vietoris construction of the corresponding
  structure. Combining these characterizations, we obtain such a
  characterization for spans of Boolean spaces with internal
  semigroups which we use to describe the topo-algebraic counterpart
  of monadic second-order existential quantification.  This is closely
  related to a part of the earlier work on existential quantification
  in first-order logic on words by Gehrke, Petri\c san and Reggio.
  The observation that certain morphisms lift contravariantly to the
  appropriate power structures makes our analysis very simple.
\end{abstract}
\section{Introduction}

The interaction of \emph{recognition} and \emph{quantification}
involves the analysis of spans of topological spaces of the form
\begin{equation}
  \begin{aligned}
    \begin{tikzcd}[node distance = 20mm]
      \node (S) at (0,0) {\beta S}; \node (T) [below left of = S]
      {\beta T}; \node (X) [below right of = S] {X};
      \draw[->] (S) to node[right, yshift = 2mm, xshift = -1mm] {g}
      (X); \draw[->] (S) to node[left, yshift = 2mm, xshift = 2mm]
      {\beta f} (T);
    \end{tikzcd}
  \end{aligned}
  \label{eq:7}
\end{equation}
where $T$ is the set of structures for the logic at hand, $S$ is the
set of all models over these structure (i.e. a set of free variables
is fixed and elements of $S$ consist of a structure from $T$ equipped
with an interpretation of the free variables). In the case of the
existential quantifier, the map $f: S \to T$ is just the map taking a
model to the underlying structure, and $\beta$ is the Stone-\v Cech
compactification (or, equivalently, $\beta f$ is the Stone dual of the
Boolean algebra homomorphism $f^{-1}: \cP(T) \to \cP(S)$). The idea of
recognition is that, instead of specifying a subalgebra of $\cP(S)$
corresponding to the formulas with free variables we want to study,
$g$ is a continuous map to a Boolean space dual to the subalgebra.

If $\cB$ is the Boolean algebra of clopens of $X$, then the Boolean
algebra $g^{-1}[\cB] = \{g^{-1}(U) \mid U \in \cB\}$ consists of the
model classes of the formulas with free variables that we want to
study. Also, for $L = g^{-1}(U) \cap S$, $L_\exists = f[L]$, is the
set of structures satisfying the corresponding (existentially)
quantified formula, and we are interested in building a recognizer for
the Boolean subalgebra of $\cP(T)$ generated by the sets
$f[g^{-1}(U)]$, for $U \in \cB$.

In~\cite{GehrkePetrisanReggio16}, it is shown that the desired
recognizer is of the form
\begin{equation}
  \label{eq:6}
  h: \beta T \to \viet(X)
\end{equation}
and is given dually by $\Diamond V \mapsto f[g^{-1}(V)]$, invoking
Vietoris for Boolean spaces as the dual of the monad for modal logic.
A categorical approach for a similar construction, also having in mind
applications in formal language theory, may be found
in~\cite{ChenUrbat16}.

A much simpler analysis would ensue if we can obtain $h$ by lifting
the pair~\eqref{eq:7} as follows
\begin{equation}
  \label{eq:8}
  \beta T \hookrightarrow \viet(\beta T) \xrightarrow{(\beta f)^*}
  \viet (\beta S) \xrightarrow{\viet(g)} \viet(X).
\end{equation}
Here $\viet(g)$ is the \emph{forward image} under $g$, while $(\beta
f)^*$ is the \emph{inverse image} under~$f$. It is well known that
forward image is always continuous and $\viet$ is indeed viewed as a
covariant endofunctor with $\viet(g)$ given by forward image. On the
other hand, inverse image under a continuous map is not in general
continuous on the Vietoris spaces.

Our short and simple analysis here consists in observing that for
certain continuous maps (those whose duals have lower adjoints, and
for the $\beta f$'s in particular) inverse image \emph{does} give a
continuous map. Indeed, we show that all continuous maps of the
form~\eqref{eq:6} come about from a span composing inverse and direct
image.

In the setting of recognition, we have to deal with a superposition of
semigroup structure and topology. We show that a similar phenomenon is
at play for semigroups and that these combine correctly to give a
simple description of the topo-algebraic recognizing maps of the
form~\eqref{eq:6} by means of a composition as in~\eqref{eq:8}.

The work in~\cite{GehrkePetrisanReggio16} and further papers in this
direction for quantification in \emph{first-order
  logic}~\cite{GehrkePetrisanReggio17, BorlidoCzarnetzkiGehrkeKrebs17}
deal with a second complication which is much deeper, and which stems
from the combined fact that the set $S$ of models does not carry an
appropriate semigroup structure and that $f$ is not a homomorphism of
semigroups.
We also mention that, although they do not refer to logic on words,
the corresponding treatment for the fragments of logic defining
\emph{regular languages} may be inferred from~\cite{AlmeidaWeil1995}.
Here, we treat a much simpler case, namely \emph{monadic second-order
  logic}, for which $T$ and $S$ are semigroups and $f$ is a
length-preserving semigroup homomorphism (i.e., one of those for which
inverse images lift homomorphically to the powerset semigroups).

This note is organized as follows.  In Section~\ref{sec:prelim}, we
set up the notation and define the main concepts used later.  In
Section~\ref{sec:powers} we study power constructions both for compact
Hausdorff and Boolean spaces and for semigroups. Finally, in
Section~\ref{sec:power-bis} we show that forward images under
length-preserving homomorphisms of languages recognized by a Boolean
space with an internal semigroup are precisely those that are
recognized by its Vietoris. This generalizes a known result for
regular languages~\cite{Reutenauer79,Straubing79}. Interpreting the result in the
context of logic on words leads to a topo-algebraic description of
second-order existential quantification.
\section{Preliminaries}\label{sec:prelim}
The reader is assumed to have some acquaintance with Stone duality and
with semigroups and the theory of formal languages, including logic on
words. Nevertheless, we briefly recall the main concepts and results
that we will need. For further reading on topology, we refer
to~\cite{Willard70}, on Stone duality to~\cite{DaveyPriestley02}, on
semigroup and formal language theory to~\cite{Almeida94}, and on logic
on words to~\cite{Straubing94}.

\paragraph{Stone duality.} \emph{Stone duality} establishes a
correspondence between Boolean algebras and Boolean spaces.\footnote{A
  Boolean space is a compact Hausdorff topological
  space with a basis of clopen sets.}
On the level of objects it goes as follows. Given a Boolean space~$X$,
the set of its clopen (i.e., both open and closed) subsets, $\cl(X)$,
forms a Boolean algebra (actually, this is true for every topological
space). Conversely, if~$\cB$ is a Boolean algebra, then the set of
ultrafilters\footnote{An ultrafilter is a proper maximal nonempty
  upset $x \subseteq \cB$ closed under binary meets.} of $\cB$, $X_\cB$, is
a Boolean space when equipped with the topology generated by the sets
of the form
\[\widehat {b} = \{x \in X_\cB \mid b \in x\},\]
for $b \in \cB$. We have isomorphisms $\cB \cong \cl(X_\cB)$ and $X \cong
X_{\cl(X)}$.
For the morphisms, we have the following correspondence. If $f: X \to
Y$ is a continuous function between Boolean spaces, then taking
preimages of clopens defines a homomorphism $f^{-1}:\cl(Y) \to \cl(X)$
of Boolean algebras, and if $\alpha: \cB \to \cC$ is a homomorphism of
Boolean algebras, for every ultrafilter $y \in X_\cC$, the set $\{b \in
\cB \mid \alpha (b) \in y\}$ is an ultrafilter of $\cB$, and this
correspondence yields a continuous function $X_\cC \to X_\cB$.

Dual spaces of powerset Boolean algebras will play a role in the
sequel. For any set~$S$, the Stone dual of $\cP(S)$ is the
\emph{Stone-\v Cech compactification of $S$}, which we will denote
by~$\beta S$. The set~$S$ embeds densely in~$\beta S$ via the map $s
\mapsto \{x \in \beta S \mid s \in x\}$.
We say that a subset $L \subseteq S$ is \emph{set-theoretically
  recognized} by a Boolean space~$X$ provided there is a continuous
map $f: \beta S \to X$ and a clopen subset $C \subseteq X$ such that
$L = f^{-1}(C) \cap S$.
Any function of sets $f: S \to T$ yields a homomorphism of Boolean
algebras $f^{-1}: \cP(T) \to \cP(S)$ and the corresponding dual map
will be denoted $\beta f: \beta S \to \beta T$.
\paragraph{Formal languages and recognition.} An \emph{alphabet} is a
finite set of symbols $A$, also called \emph{letters}, a (non-empty)
\emph{word} over $A$ is an element of the free semigroup $A^+$ and a
\emph{(formal) language} $L$ is a set of words over some alphabet.
The Boolean algebra of all languages over $A$, $\cP(A^+)$, is
naturally equipped with a biaction of $A^+$ given by
\begin{equation}
  u^{-1}Lv^{-1} = \{w \in A^+\mid uwv \in L\},\label{eq:1}
\end{equation}
for every $u,v \in A^+$ and $L \subseteq A^+$. Languages of the
form~\eqref{eq:1} are called \emph{quotients of $L$}.

We say that a language $L$ is \emph{recognized} by a semigroup $S$
provided there is a homomorphism $f:A^+ \to S$ and a subset $Q
\subseteq S$ satisfying $L = f^{-1}(Q)$, or equivalently, provided $L
= f^{-1}(f[L])$. Notice that the set of all languages recognized by a
given homomorphism forms a Boolean algebra closed under
quotients. Given a homomorphism $h: B^+ \to A^+$ between finitely
generated free semigroups and a language $L \subseteq A^+$ recognized
by~$S$, we have that $h^{-1}(L)$ is also recognized by~$S$. The
homomorphism~$h$ is said to be \emph{length-preserving}, or an
\emph{lp-morphism}, if it maps letters to letters. Languages
recognized by finite semigroups are called \emph{regular}.

To handle non-regular languages topo-algebraically, one needs to
consider richer structures, which we introduce below.
Notice that an attempt to make the notion of recognition as uniform as
possible was first made in~\cite{Bojanczyk15} and followed up
by~\cite{ChenAdamekMiliusUrbat2016}. But in both cases the main
concern is to capture several computational models within the same
framework, and not going beyond \emph{regularity}.
\paragraph{Boolean spaces with internal semigroups.}
This concept, for monoids, was introduced
in~\cite{GehrkePetrisanReggio16}, based on ideas
of~\cite{GehrkeGrigorieffPin10}. We give a short introduction. For
more details see~\cite{GehrkePetrisanReggio16}
or~\cite{GehrkePetrisanReggio17-arxiv}.

If $\cB \subseteq \cP(A^+)$ is a Boolean algebra of languages, then
dually, we have a continuous quotient $q: \beta (A^+)
\twoheadrightarrow X_\cB$. If the Boolean algebra~$\cB$ is closed
under quotients, then~$q[A^+]$ has a natural semigroup structure,
which is inherited from the duals of the homomorphisms
$u^{-1}(\_)v^{-1}: \cB \to \cB$. Moreover, the restriction of~$q$
to~$A^+$ induces a homomorphism $A^+ \twoheadrightarrow q[A^+]$ onto a
dense subset of~$X_\cB$. The pair $(q[A^+], X_\cB)$ encodes the
essential information about the Boolean algebra closed under
quotients~$\cB$. This is the reasoning motivating the definition of a
\emph{Boolean space with an internal semigroup}.

A \emph{Boolean space with an internal semigroup}, or \emph{\bis{}}
for short, is a pair $(S,X)$ where $S$ is a semigroup densely
contained in a Boolean space~$X$, and such that the natural actions of
$S$ on itself extend to continuous endomorphisms of~$X$, that is, for
each $s \in S$, there are continuous functions $\lambda_s, \rho_s: X
\to X$ such that the following diagrams commute:
\begin{center}
  \begin{tikzpicture}[node distance = 15mm]
    \node (S) at (0,0) {$S$};
    \node[below of = S] (SS) {$S$};
    \node[right of = S] (X) {$X$};
    \node[below of = X] (XX) {$X$};
    \node (Sr) at (6,0) {$S$}; \node[below of = Sr] (SSr) {$S$};
    \node[right of = Sr] (Xr) {$X$}; \node[below of = Xr] (XXr) {$X$};
    \draw[->] (S) to node[left] {\footnotesize $s\cdot (\_)$} (SS);
    \draw[>->] (S) to (X); \draw[>->] (SS) to (XX); \draw[->] (X) to
    node[right] {\footnotesize $\lambda_s$} (XX);
    \draw[->] (Sr) to node[left] {\footnotesize $(\_)\cdot s$} (SSr);
    \draw[>->] (Sr) to (Xr); \draw[>->] (SSr) to (XXr); \draw[->] (Xr)
    to node[right] {\footnotesize $\rho_s$} (XXr);
  \end{tikzpicture}
\end{center}
The prime example of a \bis{} is the pair $(A^+, \beta(A^+))$, which
corresponds to taking $\cB = \cP(A^+)$.

A morphism of \biss{}, $f: (S, X) \to (T, Y)$, is a continuous map
$f:X \to Y$ so that $f$ restricts to a semigroup homomorphism
$f{\upharpoonright} : S \to T$.

A language $L \subseteq A^+$ is recognized by the \bis{} $(S, X)$ if
there exists a morphism $f: (A^+, \beta(A^+)) \to (S, X)$ and a clopen
subset $C \subseteq X$ such that $L = f^{-1}(C) \cap A^+$. Notice that
each semigroup homomorphism $A^+ \to S$ completely determines a
morphism of \biss{} $(A^+, \beta(A^+)) \to (S, X)$. It is not hard to
verify that the set of all languages recognized by a \bis{} via a
fixed homomorphism is a Boolean algebra closed under quotients.
\paragraph{Logic on words.} As the name suggests, \emph{logic on
  words} is meant to express properties of words. We consider two
kinds of variables: first-order and second-order
variables. Intuitively, first-order variables provide information
about positions in a word, while the second-order variables stand for
sets of positions. First-order variables are denoted by $x, x_1, x_2,
\ldots$, and the second-order ones by $X, X_1, X_2, \ldots$.  There
are the following three types of atomic formulas:
\begin{itemize}
\item if $R \subseteq \mathbb N^k$, then $R(x_1, \ldots, x_k)$ is a
  (uniform) \emph{$k$-ary numerical predicate} (expressing that ``the
  tuple of positions $(x_1, \ldots, x_k)$ belongs to $R$'');
\item if $a \in A$, then ${\bf a}(x)$ is a \emph{letter predicate}
  (expressing that in the ``position $x$ there is an $a$'');
\item if $x$ is a first-order variable and $X$ is a second-order
  variable, then $X(x)$ is an atomic formula (expressing that ``$x$
  belongs to $X$'').
\end{itemize}
Then, Boolean combinations of formulas are formulas and if $\phi$ is a
formula, then both $\exists x \ \phi$ and $\exists X \ \phi$ are
formulas.
It is well known that every monadic second-order sentence is
equivalent, over words, to a formula of the form $\exists X_1 \cdots
\exists X_N \ \phi(X_1, \ldots, X_N)$, for some first-order formula
$\phi$ with free variables in $\{X_1, \ldots, X_N\}$.
To interpret formulas with $N$ second-order free variables, one
usually considers words over the extended alphabet $A \times 2^N$. For
instance, if $N = 1$, then the word $(a,1)(b,0)(b,1)(b,0)(a,1) \in (A
\times 2)^+$ encodes the word $w = abbba$ with the only second-order
variable interpreted in the set of odd positions of~$w$. Thus, every
formula $\phi = \phi (X_1, \ldots, X_N)$ with free variables in
$\{X_1, \ldots, X_N\}$ defines a language $L_{\phi(X_1, \ldots, X_N)}
\subseteq (A \times 2^N)^+$, and the language over $A^+$ definable by
$\exists X_1 \cdots \exists X_N \ \phi(X_1, \ldots, X_N)$ consists of
all the words $w$ for which there is an interpretation of the free
variables satisfying $\phi$. In other words, $T = A^+$ is the set of
all structures for logic on words in~$A$, $S = (A \times 2^N)^+$ is
the set of all MSO models in $N$ free variables on words in~$A$, and
the lp-morphism $\pi_N:(A \times 2^N)^+ \twoheadrightarrow A^+$, given
by the projection $A\times 2^N \twoheadrightarrow A$, gives rise to
existential quantification in the sense that $L_{\exists X_1 \cdots
  \exists X_N \ \phi(X_1, \ldots, X_N)} = \pi_N[L_{\phi (X_1, \ldots,
  X_N)}]$. More generally, given a language $L \subseteq (A \times
2^N)^+$, we denote $L_{\exists_N} = \pi_N[L]$.
\section{Some power constructions}\label{sec:powers}
\paragraph{Compact Hausdorff and Boolean spaces.}

The power of a compact Hausdorff spaces is the so-called
\emph{Vietoris space}.  \emph{Vietoris} is a covariant endofunctor on
compact Hausdorff spaces which restricts to the category of Boolean
spaces.
At the level of objects it assigns to a space~$X$ the set of all its
closed subsets, denoted $\viet(X)$ and called the \emph{Vietoris space
  of $X$}, equipped with the topology generated by the sets of the
form
\[\Diamond U = \{C \in \viet (X) \mid C \cap U \neq \emptyset\} \qquad
  \text{ and }\qquad \Box U = \{C \in \viet(X) \mid C \subseteq U\},\]
where $U \subseteq X$ ranges over all open subsets of~$X$.
In the case where $X$ is a Boolean space, taking $\Diamond U$ and
$\Box U$ for $U$ clopen gives a subbasis of clopen subsets
for~$\viet(X)$. For more details see~\cite{Michael51}.
Note that, since $X$ is Hausdorff, each singleton is closed and thus
the map $i_X\colon X\to\viet(X)$ sending each $x\in X$ to $\{x\}$ is
well defined. Further note that, for any open $U\subseteq X$, we have
\[
  \Diamond U\cap \im(i_X)=i_X[U] \qquad \text{and} \qquad \Box U \cap
  \im(i_X)=i_X[U]
\]
so that $X$ is homeomorphically embedded in $\viet(X)$ via the map $i_X$.
When the space $X$ is clear from the context, we denote this embedding
simply by~$i$. 

On morphisms, the Vietoris functor acts as follows: if $g: Z \to X$ is
a continuous function, then so is
\[\viet(g): \viet(Z) \to \viet(X), \qquad C \mapsto g[C].\]
Indeed, routine computations show that, for an open subset $U
\subseteq X$, we have
\begin{equation}
  \viet(g)^{-1}(\Diamond U) =
  \Diamond(g^{-1}(U)) \quad \text{ and }\quad \viet(g)^{-1}(\Box U) =
  \Box(g^{-1}(U)).\label{eq:2}
\end{equation}

On the other hand, given a continuous map $f: Z \to Y$, taking
preimages also defines a function between the corresponding Vietoris
spaces, but in a contravariant way. A natural question is then under
which conditions the function $f^* := f^{-1}: \viet(Y) \to \viet(Z)$
is continuous.

\begin{proposition}\label{p:3}
  Let $f: Z \to Y$ be a continuous function between compact Hausdorff
  spaces. Then, the following are equivalent:
  \begin{enumerate}[label = (\alph*)]
  \item\label{item:1} $f^*: \viet(Y) \to \viet(Z)$ is continuous,
  \item\label{item:2} $f^* \circ i: Y \to \viet(Z)$ is continuous,
  \item\label{item:3} $f$ is open.
  \end{enumerate}
\end{proposition}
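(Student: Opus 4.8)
The plan is to prove the cycle $\ref{item:1} \Rightarrow \ref{item:2} \Rightarrow \ref{item:3} \Rightarrow \ref{item:1}$, exploiting throughout the fact that a map into a Vietoris space is continuous exactly when the preimages of the subbasic opens $\Diamond U$ and $\Box U$ are open. Two of the three implications reduce to one-line computations; the only place where genuine topological input enters is the behaviour of the $\Box$-sets under $f^*$.

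The implication $\ref{item:1} \Rightarrow \ref{item:2}$ is immediate: the map $i\colon Y \to \viet(Y)$ is continuous (indeed an embedding, as recalled before the statement), so $f^* \circ i$ is a composite of continuous maps. For $\ref{item:2} \Rightarrow \ref{item:3}$, I would observe that $f^* \circ i$ sends each $y$ to the fibre $f^{-1}(y)$, and compute, for open $U \subseteq Z$, that $(f^* \circ i)^{-1}(\Diamond U) = \{y \in Y \mid f^{-1}(y) \cap U \neq \emptyset\} = f[U]$. Continuity of $f^* \circ i$ then forces $f[U]$ to be open for every open $U$, which is exactly openness of $f$.

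The substantive direction is $\ref{item:3} \Rightarrow \ref{item:1}$, where I must check that $f^*$ pulls both kinds of subbasic opens back to opens. For the diamond sets, the computation $(f^*)^{-1}(\Diamond U) = \{C \in \viet(Y) \mid f^{-1}(C) \cap U \neq \emptyset\} = \Diamond(f[U])$ shows that openness of $f$ is precisely what makes $f[U]$, and hence $\Diamond(f[U])$, open. The key step, and the one I expect to be the main obstacle conceptually, is the box sets: here $(f^*)^{-1}(\Box U) = \{C \mid f^{-1}(C) \subseteq U\} = \Box\bigl(Y \setminus f[Z \setminus U]\bigr)$, and the crux is that $Z \setminus U$ is closed in the compact space $Z$, hence compact, so its continuous image $f[Z \setminus U]$ is compact and therefore closed in the Hausdorff space $Y$; thus $Y \setminus f[Z \setminus U]$ is open and $\Box\bigl(Y \setminus f[Z \setminus U]\bigr)$ is subbasic open.

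Since the preimages of all subbasic opens are open, $f^*$ is continuous, closing the cycle. It is worth highlighting that the box computation uses only compactness and Hausdorffness, not openness of $f$, so the hypothesis that $f$ be open is really called upon only to tame the diamond sets. This dovetails with the observation that it is precisely the diamond computation which reappears in $\ref{item:2} \Rightarrow \ref{item:3}$, and explains why restricting $f^*$ to singletons already detects the open-ness obstruction.
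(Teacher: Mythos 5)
Your proof is correct and follows essentially the same route as the paper's: the identical computations $(f^*)^{-1}(\Box U)=\Box\bigl(f[U^c]\bigr)^c$ (via closedness of $f$ as a continuous map from a compact space to a Hausdorff one) and $(f^*)^{-1}(\Diamond U)=\Diamond f[U]$, with the singleton restriction detecting openness for the implication from \ref{item:2} to \ref{item:3}. The only difference is purely organizational (you arrange the implications as a cycle, the paper proves the box estimate unconditionally first), and your closing remark that the hypothesis of openness is needed only for the diamond sets is exactly the paper's point as well.
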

\begin{proof}
  Since $Y$ is homeomorphically embedded in $Y$ via the map $i$, it is
  clear that $\ref{item:1}$ implies $\ref{item:2}$.
The remainder of the proposition is essentially a consequence of the fact that,
for any set map $f$, the forward image  under $f$ is lower adjoint to the inverse
image under $f$. That is,
\[
  \forall\ S\subseteq Y,\ T\subseteq Z\qquad \qquad(\ f[T]\subseteq S
  \ \iff\ T\subseteq f^{-1}(S)\ ).
\]
Using this, we see first of all that, for compact Hausdorff spaces, the map $f^*$ is always 
continuous with respect to the `box-part' of the topology. That is, for any open $U\subseteq Z$ 
and closed $K\subseteq Y$, we have 
\begin{align*}
  K\in (f^*)^{-1}(\Box U) \iff f^{-1}(K)\subseteq U
  &\iff U^c\subseteq(f^{-1}(K))^c=f^{-1}(K^c)\\
  &\iff f[U^c]\subseteq K^c\iff K\subseteq(f[U^c])^c.
\end{align*}
Now, since $f$ is continuous, $Y$ is compact, and $Z$ is Hausdorff, it follows that $f$ is a 
closed mapping and thus $(f[U^c])^c$ is open. That is, we have shown that 
\[
(f^*)^{-1}(\Box U) = \Box (f[U^c])^c.
 \]
 Similarly, we have
 \begin{align*}
   K\in (f^*)^{-1}(\Diamond U) \iff f^{-1}(K)\cap U\neq\emptyset
   &\iff U\not\subseteq(f^{-1}(K))^c=f^{-1}(K^c)\\
   &\iff f[U]\not\subseteq K^c\iff K\cap f[U]\neq\emptyset.
 \end{align*}
 Now, if $f$ is open, then the above calculation shows
 $(f^*)^{-1}(\Diamond U) = \Diamond f[U]$ and thus that $\ref{item:1}$
 and $\ref{item:2}$ hold. Conversely, if $f^*\circ i$ is continuous,
 then, for any open $U\subseteq Z$, the set
 \begin{equation}
   (f^*\circ i)^{-1}(\Diamond U)=\{y\in Y\mid \{y\}\cap
   f[U]\neq\emptyset\}=f[U]\label{eq:4}
 \end{equation}
 must be open.
\end{proof}

In particular we have the following:
\begin{corollary}\label{c:2}
  Let $X$, $Y$ and $Z$ be compact Hausdorff spaces and $f: Z \to Y$
  and $g: Z \to X$ be continuous functions. If $f$ is an open map,
  then
  \[h = \viet(g) \circ f^* \circ i: Y \to \viet(X), \qquad y \mapsto
    h(y) = g[f^{-1}(\{y\})]\]
  is continuous. Moreover, for every open subset $U \subseteq X$, the
  equality $h^{-1}(\Diamond U) = f[g^{-1}(U)]$ holds.
\end{corollary}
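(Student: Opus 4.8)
The plan is to obtain both assertions by bolting together the pieces already established, since the real content lies in Proposition~\ref{p:3}. For continuity, I would simply observe that $h$ is a composite of three continuous maps: the embedding $i\colon Y \to \viet(Y)$ is continuous (indeed a homeomorphism onto its image), the map $f^*\colon \viet(Y) \to \viet(Z)$ is continuous precisely because $f$ is assumed open, by the implication $\ref{item:3}\Rightarrow\ref{item:1}$ of Proposition~\ref{p:3}, and $\viet(g)$ is continuous because $\viet$ is a functor. Alternatively, $f^*\circ i$ is already continuous by $\ref{item:3}\Rightarrow\ref{item:2}$, so only one appeal to the proposition is needed; hence $h = \viet(g)\circ f^*\circ i$ is continuous either way.

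Before that, I would check that $h$ is even well defined and record the pointwise formula by unwinding the definitions: for $y \in Y$ we have $i(y) = \{y\}$, which is closed since $Y$ is Hausdorff; then $f^*(\{y\}) = f^{-1}(\{y\})$, closed by continuity of $f$; and finally $\viet(g)$ sends this to $g[f^{-1}(\{y\})]$, a closed (compact) subset of $X$ and therefore a legitimate element of $\viet(X)$. This gives $h(y) = g[f^{-1}(\{y\})]$ as claimed.

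For the displayed identity on preimages of $\Diamond$-opens, I would chase the composition using the two computations already in hand. Since taking preimages reverses composition, $h^{-1}(\Diamond U) = (f^*\circ i)^{-1}\big(\viet(g)^{-1}(\Diamond U)\big)$. Equation~\eqref{eq:2} rewrites $\viet(g)^{-1}(\Diamond U) = \Diamond(g^{-1}(U))$, and since $g^{-1}(U)$ is open (as $U$ is open and $g$ is continuous), equation~\eqref{eq:4} from the proof of Proposition~\ref{p:3} yields $(f^*\circ i)^{-1}\big(\Diamond(g^{-1}(U))\big) = f[g^{-1}(U)]$. Combining these gives $h^{-1}(\Diamond U) = f[g^{-1}(U)]$.

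I do not anticipate a genuine obstacle here: all the difficulty was absorbed into Proposition~\ref{p:3}, and what remains is bookkeeping. The only points requiring a little care are confirming that each intermediate set is of the right type (closed sets throughout, and $g^{-1}(U)$ open so that \eqref{eq:4} applies), and noting that continuity need not be re-derived subbasis-by-subbasis since the composite formulation delivers it directly. The $\Diamond U$ formula is then an extra, and it is exactly the identity needed for the recognition application, where $f[g^{-1}(U)]$ is the forward image describing the quantified language.
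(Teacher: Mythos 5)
Your proposal is correct and follows essentially the same route as the paper: continuity is deduced from Proposition~\ref{p:3} applied to the open map $f$, and the identity $h^{-1}(\Diamond U)=f[g^{-1}(U)]$ is obtained by the same two-step rewriting via~\eqref{eq:2} and~\eqref{eq:4}. The extra verification that $h(y)=g[f^{-1}(\{y\})]$ is a legitimate element of $\viet(X)$ is a harmless addition the paper leaves implicit.
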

\begin{proof}
  The fact that $h$ is continuous follows immediately from
  Proposition~\ref{p:3}. Moreover, for an open subset $U \subseteq X$,
  we have:
  \[h^{-1}(\Diamond U) = (f^*\circ i)^{-1}(\viet(g)^{-1}(\Diamond U))
    \just = {\eqref{eq:2}} (f^* \circ i)^{-1}(\Diamond g^{-1}(U))
    \just = {\eqref{eq:4}} f[g^{-1}(U)]. \popQED\]  
\end{proof}

In the next proposition we show that every continuous map $Y \to
\viet(X)$ arises from a composition as in Corollary~\ref{c:2}.

\begin{proposition}\label{p:5}
  Let $X$ and $Y$ be compact Hausdorff spaces and $h: Y \to \viet(X)$
  be a continuous function. Then, the subspace
  \[Z = \{(y, x) \in Y \times X \mid x \in h(y)\}\]
  of $Y \times X$ is compact and Hausdorff. In particular, $h$ is of
  the form $\viet(g)\circ f^* \circ i$, where $f$ and $g$ are the
  restrictions to $Z$ of the projections to~$Y$ and~$X$, respectively.
\end{proposition}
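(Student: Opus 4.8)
The plan is to establish the two assertions in turn: first that $Z$ is compact and Hausdorff, and then that $h$ factors as claimed. Hausdorffness is immediate, since $Z$ is a subspace of the product $Y \times X$, which is Hausdorff as a product of Hausdorff spaces. Since $Y\times X$ is compact Hausdorff and closed subspaces of compact spaces are compact, it suffices for compactness to show that $Z$ is \emph{closed} in $Y \times X$.

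The crux is therefore the closedness of $Z$, and the key observation is that the membership relation
\[ E = \{(C, x) \in \viet(X) \times X \mid x \in C\} \]
is closed in $\viet(X) \times X$. To see this, I would take a pair $(C, x)$ with $x \notin C$ and use that $X$, being compact Hausdorff, is normal: the disjoint closed sets $\{x\}$ and $C$ can be separated by disjoint open sets $U \ni x$ and $V \supseteq C$, and then $\Box V \times U$ is a neighbourhood of $(C, x)$ disjoint from $E$, because $C' \subseteq V$ together with $U \cap V = \emptyset$ forces $x' \notin C'$ for every $(C', x') \in \Box V \times U$. Granting this, $Z$ is the preimage of $E$ under the continuous map $Y \times X \to \viet(X) \times X$, $(y, x) \mapsto (h(y), x)$ (continuous precisely because $h$ is), and hence closed.

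For the factorization, I would first check that $f\colon Z \to Y$, the restriction of the first projection, is an \emph{open} map, so that Proposition~\ref{p:3} applies and Corollary~\ref{c:2} can be invoked for the pair $(f, g)$. For a basic open set $(V \times U) \cap Z$ of $Z$, with $V \subseteq Y$ and $U \subseteq X$ open, a direct computation gives
\[ f\big[(V \times U) \cap Z\big] = \{y \in V \mid h(y) \cap U \neq \emptyset\} = V \cap h^{-1}(\Diamond U), \]
which is open exactly because $h$ is continuous; thus $f$ is open. By Corollary~\ref{c:2} the map $\viet(g) \circ f^* \circ i$ is then continuous and sends $y$ to $g[f^{-1}(\{y\})]$.

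It remains to identify this value with $h(y)$, which is a routine unwinding: $f^{-1}(\{y\}) = \{(y, x) \mid x \in h(y)\}$, and applying $g$, the second projection, yields $g[f^{-1}(\{y\})] = h(y)$. Hence $h = \viet(g) \circ f^* \circ i$, as required. I expect the only genuine subtlety to be the closedness argument for $Z$, that is, the fact that the membership relation $E$ is closed; the openness of $f$ and the final identification are short computations resting directly on the continuity of $h$.
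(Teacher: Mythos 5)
Your proof is correct and follows essentially the same route as the paper's: the closedness of $Z$ is established by the same separation argument (regularity/normality of $X$ applied to $x$ and the closed set $h(y)$, combined with continuity of $h$ on the $\Box$-part of the Vietoris topology), merely repackaged through the closedness of the membership relation $E$. You additionally verify the factorization claim in detail --- the openness of $f$ via $f[(V\times U)\cap Z]=V\cap h^{-1}(\Diamond U)$ and the identification $g[f^{-1}(\{y\})]=h(y)$ --- which the paper leaves implicit; this is a worthwhile supplement rather than a divergence.
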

\begin{proof}
  The space $Z$ is compact Hausdorff if $Z$ is a closed subspace of~$Y
  \times X$.
  We show that $Z^c$ is open. Let $(y,x) \notin Z$. Then, $x \notin
  h(y)$ and since $h(y) \subseteq X$ is closed and $X$ is compact and
  Hausdorff (and thus, regular), there exist disjoint open subsets $U,
  V \subseteq X$ so that $x \in U$ and $h(y) \subseteq V$. Since $h$
  is continuous, it follows that $h^{-1}(\Box V) \times U$ is an open
  neighborhood of $(y,x)$ contained in $Z^c$.
\end{proof}

We finish this section with a characterization of the open maps
between Boolean spaces. Recall that, in general, a \emph{lower
  adjoint} of a map $\alpha: P \to Q$ of posets is a map $\alpha_*: Q
\to P$ satisfying
\[
  \forall\ p \in P,\ q \in Q \qquad \qquad(\ \alpha_*(q)\le p \ \iff\
  q \le \alpha(p)\ ).
\]

\begin{proposition}
  Let $\cB$ and $\cC$ be Boolean algebras, with duals $Y$ and $Z$,
  respectively. Then, a continuous function $f: Z \to Y$ is open if
  and only if the dual map $\alpha: \cB \to \cC$ has a lower adjoint.
\end{proposition}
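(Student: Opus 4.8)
The plan is to work through Stone duality, identifying $\cB$ with $\cl(Y)$, $\cC$ with $\cl(Z)$, and the dual homomorphism $\alpha\colon\cB\to\cC$ with the inverse-image map $f^{-1}$, so that $\alpha(b)=f^{-1}(b)$ for every clopen $b\subseteq Y$. The entire argument then rests on the single elementary fact already exploited in Proposition~\ref{p:3}: for any map of sets, forward image is lower adjoint to inverse image, i.e.
\[
  \forall\, c\subseteq Z,\ b\subseteq Y\qquad\qquad(\ f[c]\subseteq b\ \iff\ c\subseteq f^{-1}(b)\ ).
\]
This says that $c\mapsto f[c]$ is \emph{formally} a lower adjoint of $\alpha=f^{-1}$; the only thing that can go wrong is that $f[c]$ need not be a clopen of $Y$, i.e. need not land in $\cB$. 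So the proposition reduces to showing that openness of $f$ is exactly the condition keeping forward images of clopens inside $\cB$.

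For the forward direction, suppose $f$ is open. Given a clopen $c\in\cC=\cl(Z)$, the set $f[c]$ is open because $f$ is open, and it is closed because $c$ is compact (being clopen in the compact space $Z$), its continuous image $f[c]$ is therefore compact, and $Y$ is Hausdorff. Hence $f[c]\in\cl(Y)=\cB$, and the displayed set-theoretic adjunction immediately exhibits $c\mapsto f[c]$ as a lower adjoint of $\alpha$.

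For the converse, suppose $\alpha$ has a lower adjoint $\alpha_*\colon\cC\to\cB$. Since the clopens form a basis of $Z$, it suffices to show that $f[c]$ is open for each clopen $c$, and I claim in fact that $f[c]=\alpha_*(c)$. The inclusion $f[c]\subseteq\alpha_*(c)$ is the unit of the adjunction: from $\alpha_*(c)\le\alpha_*(c)$ we get $c\le\alpha(\alpha_*(c))=f^{-1}(\alpha_*(c))$, i.e. $f[c]\subseteq\alpha_*(c)$. For the reverse inclusion I would use that $f[c]$ is closed (same compactness argument as above) together with zero-dimensionality of $Y$: if some $y\in\alpha_*(c)$ lay outside the closed set $f[c]$, then by compactness one produces a clopen $b$ with $f[c]\subseteq b$ and $y\notin b$; but $f[c]\subseteq b$ gives $c\subseteq f^{-1}(b)=\alpha(b)$, whence $\alpha_*(c)\le b$ by the adjunction, contradicting $y\in\alpha_*(c)\setminus b$. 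Thus $f[c]=\alpha_*(c)$ is clopen, so $f$ carries basic opens to opens and is therefore open. The routine parts are the set-theoretic adjunction and the compactness arguments showing $f[c]$ closed; the one genuinely topological step, and the point I expect to require the most care, is this reverse inclusion $\alpha_*(c)\subseteq f[c]$, which is where zero-dimensionality of $Y$ (a clopen separating $y$ from the disjoint closed set $f[c]$) is essential and where the hypothesis that $\alpha_*$ is a genuine lower adjoint, rather than merely some order-preserving map, actually gets used.
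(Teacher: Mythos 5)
Your proof is correct and follows essentially the same route as the paper's: both reduce openness of $f$ to the clopen basis, use that $f[c]$ is closed and hence the intersection of the clopens containing it, and translate the set-theoretic adjunction between forward and inverse image through duality. The only cosmetic difference is that the paper packages both directions at once as ``the set of clopens containing $f[\widehat c\,]$ has a minimum iff $\{b \mid c \le \alpha(b)\}$ does,'' whereas you verify the two inclusions $f[c] \subseteq \alpha_*(c)$ and $\alpha_*(c) \subseteq f[c]$ separately.
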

\begin{proof}
  First observe that, since taking forward images preserves arbitrary
  unions, $f$ is open if and only if $f[\,\widehat c\, ]$ is open for
  every $c \in \cC$. Since $f[\, \widehat c\ ]$ is closed, we have
  \[ f[\, \widehat c \ ] = \bigcap \{\widehat b \mid b \in \cB, \
    f[\,\widehat c \ ] \subseteq \widehat b\}.\]
  Thus, $f$ is open exactly when the set
  \begin{equation}
    \{\widehat b \mid b \in \cB,
    \ f[\,\widehat c \ ] \subseteq \widehat b\}\label{eq:5}
  \end{equation}
  has a minimum for inclusion. On the other hand, the fact that
  $\alpha$ and $f$ are dual to each other, translates to the fact
  that, for every $b \in \cB$ and $c \in \cC$, we have
  \[ f[\, {\widehat c}\ ] \subseteq \widehat b \iff \widehat c
    \subseteq f^{-1}(\widehat b) = \widehat{\alpha(b)} \iff c \le
    \alpha(b).\]
  Therefore, \eqref{eq:5} has a minimum if and only if $\{b \in \cB
  \mid c \le \alpha(b)\}$ does. But this amounts to saying that
  $\alpha$ admits a lower adjoint.
\end{proof}

In particular, since every function $f: S \to T$ between sets $S$ and
$T$ induces a complete homomorphism $f^{-1}: \cP(T) \to \cP(S)$
between complete Boolean algebras, thus having a lower adjoint, we
have the following:
\begin{corollary}\label{c:1}
  For every map of sets $f: S \to T$, the map $\beta f: \beta S \to
  \beta T$ is open.
\end{corollary}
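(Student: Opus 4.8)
The plan is to deduce this directly from the preceding proposition. By definition, $\beta f: \beta S \to \beta T$ is the Stone dual of the Boolean algebra homomorphism $f^{-1}: \cP(T) \to \cP(S)$. Thus, in the notation of the preceding proposition, I would set $\cB = \cP(T)$ with dual $Y = \beta T$, and $\cC = \cP(S)$ with dual $Z = \beta S$, so that the dual map is $\alpha = f^{-1}: \cB \to \cC$. With this identification, the proposition tells us that $\beta f$ is open precisely when $f^{-1}$ admits a lower adjoint, and so it remains only to produce one.

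The key step is to observe that the forward image map $f[\,\cdot\,]: \cP(S) \to \cP(T)$ is the desired lower adjoint of $f^{-1}$. Indeed, the forward-image/inverse-image Galois connection---already recalled in the proof of Proposition~\ref{p:3}---states that for all $L \subseteq S$ and $M \subseteq T$,
\[ f[L] \subseteq M \iff L \subseteq f^{-1}(M), \]
which is exactly the adjunction $\alpha_*(L) \le M \iff L \le \alpha(M)$ required by the definition of lower adjoint (with $\alpha = f^{-1}$ and $\alpha_* = f[\,\cdot\,]$). Combining this with the preceding proposition yields the openness of $\beta f$.

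There is essentially no obstacle here beyond bookkeeping: the only point requiring care is to match the variance correctly. Since Stone duality is contravariant, the homomorphism $f^{-1}$ runs from $\cP(T)$ (the dual of the codomain $\beta T$) to $\cP(S)$ (the dual of the domain $\beta S$), so one must verify that the direction of the adjunction lines up with $f^{-1}$ itself rather than with its formal inverse. Alternatively, one could bypass the explicit adjoint and argue abstractly: $f^{-1}$ preserves arbitrary intersections, and any meet-preserving map between complete lattices has a lower adjoint, which is the route suggested by the statement's own phrasing that a complete homomorphism between complete Boolean algebras automatically has a lower adjoint.
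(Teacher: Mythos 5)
Your proposal is correct and follows essentially the same route as the paper: both reduce the claim to the preceding proposition by noting that the dual map $f^{-1}: \cP(T) \to \cP(S)$ has a lower adjoint. The only (harmless) difference is that you exhibit the adjoint concretely as the forward-image map via the standard Galois connection, whereas the paper invokes the abstract fact that a complete homomorphism between complete Boolean algebras automatically has one.
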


\begin{remark}\label{r:2}
  We remark that, so far, we proved that, given a set map $f: S \to T$
  and a continuous function $g: \beta T \to X$ the Boolean algebra
  generated by the subsets of the form $f[g^{-1}(U)]$, for $U \in
  \cl(X)$, is precisely $h^{-1}[\cl(\viet(X))] = \{h^{-1}(V) \mid V
  \in \cl(\viet(X))\}$, where $h = \viet(g) \circ f^* \circ i$.
\end{remark}
\paragraph{Semigroups.} We start by recalling that, given a semigroup
$S$, the set $\cP(S)$ of its subsets is equipped with a semigroup
structure given by pointwise multiplication:
$$Q_1 \cdot Q_2 = \{s_1s_2 \mid s_1\in Q_1, s_2 \in Q_2\},$$
for every subsets $Q_1,Q_2 \subseteq S$. In particular, there is an
embedding of semigroups $i_S: S \hookrightarrow \cP(S)$ given by
$i_S(s) = \{s\}$. When $S$ is clear from the context, we just
write~$i$. Notice that $\cP(S)$ also admits a monoid structure, with
the neutral element being the empty set, but we are only concerned
with the semigroup structure of~$\cP(S)$. Taking powers defines an
endofunctor on semigroups. Indeed, for a homomorphism $g: S \to T$,
taking forward images defines a homomorphism
\[\cP(g): \cP(S) \to \cP(T), \qquad Q \mapsto g[Q].\]
Of course, the set $\cP_{fin}(S)$ consisting of the finite subsets of
$S$ forms a subsemigroup of $\cP(S)$, and for a homomorphism $g: S \to
T$, $\cP(g)$ restricts to a homomorphism $\cP_{fin}(g): \cP_{fin}(S)
\to \cP_{fin}(T)$. This observation will be useful in
Section~\ref{sec:powers}.

On the other hand, if $f: S \to T$ is a semigroup homomorphism, then
taking preimages defines a map between the corresponding powersets,
which in general is not a homomorphism. However, we do have the next
result.
\begin{lemma}\label{l:1}
  Let $f: B^+ \to A^+$ be a homomorphism between free
  semigroups. Then, the following are equivalent:
  \begin{enumerate}[label = (\alph*)]
  \item\label{item:4} $f^{*}: \cP(A^+) \to \cP(B^+)$ is a
    homomorphism,
  \item\label{item:5} $f^*\circ i: A^+ \to \cP(B^+)$ is a
    homomorphism,
  \item\label{item:6} $f$ is an lp-morphism.
  \end{enumerate}
\end{lemma}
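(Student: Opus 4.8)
The plan is to prove the cycle of implications $\ref{item:4} \Rightarrow \ref{item:5} \Rightarrow \ref{item:6} \Rightarrow \ref{item:4}$, mirroring the structure of Proposition~\ref{p:3}. The implication $\ref{item:4} \Rightarrow \ref{item:5}$ is immediate: since $i: A^+ \to \cP(A^+)$ is itself a homomorphism of semigroups, whenever $f^*$ is a homomorphism so is the composite $f^* \circ i$.

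For $\ref{item:5} \Rightarrow \ref{item:6}$, I would exploit lengths. The basic observation is that, since $f$ sends each letter of $B$ to a nonempty word, one always has $|f(w)| \geq |w|$ for every $w \in B^+$, with the consequence that $f^{-1}(\{a\})$ consists only of single letters of $B$ whenever $a \in A$ is a letter. Assuming $f^* \circ i$ is a homomorphism, I would iterate the identity $f^{-1}(\{uv\}) = f^{-1}(\{u\}) \cdot f^{-1}(\{v\})$ along a factorization $c = c_1 \cdots c_m$ of an arbitrary word $c \in A^+$ into letters, obtaining $f^{-1}(\{c\}) = f^{-1}(\{c_1\}) \cdots f^{-1}(\{c_m\})$. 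Since each factor on the right consists of single letters, every $w \in f^{-1}(\{c\})$ has length exactly $|c|$. Applying this to $c = f(b)$ for an arbitrary letter $b \in B$, and noting $b \in f^{-1}(\{f(b)\})$, forces $|f(b)| = |b| = 1$, so $f$ is an lp-morphism.

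For $\ref{item:6} \Rightarrow \ref{item:4}$, I would verify $f^{-1}(Q_1 Q_2) = f^{-1}(Q_1) \cdot f^{-1}(Q_2)$ for arbitrary $Q_1, Q_2 \subseteq A^+$. The inclusion $\supseteq$ holds for any homomorphism $f$, since $f(w_1) \in Q_1$ and $f(w_2) \in Q_2$ give $f(w_1 w_2) \in Q_1 Q_2$. The converse inclusion is where length-preservation is essential and is the heart of the argument: given $w \in f^{-1}(Q_1 Q_2)$, write $f(w) = u_1 u_2$ with $u_1 \in Q_1$ and $u_2 \in Q_2$. Because $f$ is an lp-morphism, writing $w = b_1 \cdots b_n$ yields $f(w) = f(b_1) \cdots f(b_n)$ with each $f(b_i)$ a single letter, so $|f(w)| = |w|$ and the split of $f(w)$ between $u_1$ and $u_2$ pulls back to a split of $w$; setting $w_1 = b_1 \cdots b_{|u_1|}$ and $w_2 = b_{|u_1|+1} \cdots b_n$ gives $w = w_1 w_2$ with $f(w_j) = u_j \in Q_j$, hence $w \in f^{-1}(Q_1) \cdot f^{-1}(Q_2)$.

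The main obstacle is precisely this converse inclusion in $\ref{item:6} \Rightarrow \ref{item:4}$: one must lift a factorization of the image word $f(w)$ to a factorization of $w$, and this succeeds exactly because length-preservation makes the positions of $w$ correspond bijectively to those of $f(w)$. Without the lp hypothesis a split point of $f(w)$ could fall strictly inside some block $f(b_i)$ and could not be realized by any factorization of $w$, which is the failure that $\ref{item:6}$ rules out.
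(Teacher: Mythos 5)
Your proof is correct and follows essentially the same route as the paper's: a length-counting argument shows that if $f^*\circ i$ is a homomorphism then letters must map to letters, and conversely the lp hypothesis lets one lift any factorization of $f(w)$ to a factorization of $w$. The only difference is organizational — the paper declares (a) $\Leftrightarrow$ (b) trivial and proves (b) $\Leftrightarrow$ (c) (the forward failure by contrapositive), whereas you close the cycle (a) $\Rightarrow$ (b) $\Rightarrow$ (c) $\Rightarrow$ (a), which neatly avoids having to justify (b) $\Rightarrow$ (a) on its own.
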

\begin{proof}
  The equivalence between~\ref{item:4} and~\ref{item:5} is
  trivial. Suppose that $f$ is length-preserving. Then, given $w_1,
  w_2 \in A^+$ and $u \in B^+$, we have that $u \in f^*\circ
  i(w_1w_2)$ if and only if $f(u) = w_1w_2$. Since $f$ is
  length-preserving, this happens if and only if $u$ admits a
  factorization $u = u_1u_2$ satisfying $f(u_1) = w_1$ and $f(u_2) =
  w_2$, that is, $u \in f^*\circ i(w_1)\cdot f^*\circ i(w_2)$. This
  proves that $f^*\circ i$ is a homomorphism. Conversely, if $f$ is
  not length-preserving, then there exists a letter $b \in B$ so that
  $f(b)$ may be written as $aw$ for some $a \in A$ and $w \in
  A^+$. Then, $b$ belongs to $f^*\circ i(aw)$ but not to $f^*\circ
  i(a)\cdot f^*\circ i(w)$ and so, $f^*\circ i$ is not a homomorphism.
\end{proof}

\begin{remark}\label{r:1}
  Notice that for an lp-morphism $f: B^+ \to A^+$ and a word $w \in
  A^+$, the set $f^{-1}(w)$ is finite. Thus, by Lemma~\ref{l:1}, every
  such~$f$ defines a homomorphism of semigroups $f^*: A^+ \to
  \cP_{fin}(B^+)$.
\end{remark}

The following is a particular case of a well known result in semigroup
theory (see e.g.~\cite[Chapter~XVI, Proposition~1.1]{Pin-notes}).
\begin{proposition}\label{p:4}
  Let $h: T \to \cP(S)$ be a homomorphism. Then, the set
  \[R = \{(t, s) \mid t \in T, \ s \in h(t)\}\]
  is a subsemigroup of $T \times S$. In particular, $h$ is of the form
  $\cP(g) \circ f^* \circ i$, where $f$ and $g$ are the restrictions
  to~$R$ of the projections to~$S$ and~$T$, respectively.
\end{proposition}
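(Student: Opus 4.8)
The plan is to treat this as the exact algebraic analogue of Proposition~\ref{p:5}, with the topological closedness condition replaced by closure under the product of $T \times S$; since $\cP(S)$ carries the pointwise product, the homomorphism hypothesis on $h$ is precisely what delivers this closure. First I would verify that $R$ is a subsemigroup. Taking $(t_1, s_1), (t_2, s_2) \in R$, so that $s_1 \in h(t_1)$ and $s_2 \in h(t_2)$, their product in $T \times S$ is $(t_1 t_2, s_1 s_2)$, and it suffices to check $s_1 s_2 \in h(t_1 t_2)$. As $h$ is a homomorphism and the product on $\cP(S)$ is pointwise, $h(t_1 t_2) = h(t_1) \cdot h(t_2) = \{ab \mid a \in h(t_1),\ b \in h(t_2)\}$, which contains $s_1 s_2$. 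Hence $(t_1 t_2, s_1 s_2) \in R$. This is the only place the hypothesis on $h$ enters, and it is exactly what makes $R$ closed.

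For the factorization claim I would take $f \colon R \to T$ and $g \colon R \to S$ to be the restrictions to $R$ of the two coordinate projections; these are homomorphisms, being restrictions to a subsemigroup of the projections of $T \times S$ onto its factors. It then remains to identify $\cP(g) \circ f^* \circ i$ with $h$ by a direct computation. For $t \in T$ one has $i(t) = \{t\}$, then $f^*(\{t\}) = f^{-1}(\{t\}) = \{(t, s) \mid s \in h(t)\} = \{t\} \times h(t)$, and finally $\cP(g)$ takes the forward image under the projection onto $S$, yielding $g[\{t\} \times h(t)] = h(t)$. Thus $(\cP(g) \circ f^* \circ i)(t) = h(t)$ for every $t \in T$.

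There is no genuine obstacle here: the statement is a direct verification rather than an argument with a hard core, and indeed it is flagged as a known result in semigroup theory. The only points warranting a moment of care are bookkeeping ones, namely fixing $f$ to be the projection onto $T$ and $g$ the projection onto $S$ so that the composite $\cP(g) \circ f^* \circ i$ type-checks and reproduces $h$, and the degenerate case in which $h(t) = \emptyset$ for some $t$. In that case $R$ simply contains no pair with first coordinate $t$, and the computation above still gives $g[\{t\} \times \emptyset] = \emptyset = h(t)$, so the formula remains valid throughout.
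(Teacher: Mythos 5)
Your proof is correct and is exactly the routine verification intended here: the paper itself gives no proof of Proposition~\ref{p:4}, citing it as a special case of a known result in~\cite[Chapter~XVI, Proposition~1.1]{Pin-notes}, and the homomorphism hypothesis on $h$ together with the pointwise product on $\cP(S)$ is indeed all that is needed to close $R$ under multiplication, after which the computation $g[f^{-1}(\{t\})]=h(t)$ is immediate. You were also right to take $f\colon R\to T$ and $g\colon R\to S$; the statement as printed has the two projections swapped (compare the correctly stated Proposition~\ref{p:5}), and your choice is the one for which $\cP(g)\circ f^*\circ i$ type-checks.
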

\section{The power construction for \biss{} and
  MSO}\label{sec:power-bis}

\paragraph{The power construction for \biss{}.}

Combining the power constructions of Section~\ref{sec:powers} provides
a power construction that applies to \biss{}:

\begin{definition}[{\cite[Theorem III.1]{GehrkePetrisanReggio17}}]
  Let $(S, X)$ be a \bis{}. We define the \emph{Vietoris of $(S,X)$}
  to be the~\bis{}
  \[\viet(S,X) = (\cP_{fin}(S), \viet(X))\]
  equipped with the actions
  \[\lambda_Q: \viet(X) \to \viet(X), \quad C \mapsto \bigcup_{s
      \in Q} \lambda_s[C] \qquad\text{ and }\qquad \rho_Q: \viet(X)
    \to \viet(X), \quad C \mapsto \bigcup_{s \in Q} \rho_s[C],\]
  for each $Q \in \cP_{fin}(S)$.
\end{definition}

We remark that, although the fact that $\viet(S, X)$ is a \bis{} only
appears explicitly in~\cite{GehrkePetrisanReggio17}, this is
implicitly present already in~\cite{GehrkePetrisanReggio16}.

We will say that a morphism $h: (B^+, \beta(B^+)) \to (A^+,
\beta(A^+))$ of \biss{} is \emph{length-preserving} provided its
restriction to $B^+$ is length-preserving. Using Corollary~\ref{c:2}
and Lemma~\ref{l:1}, and taking into account Remark~\ref{r:1}, we
obtain:
\begin{proposition}
  Let $A$ and $B$ be alphabets and $(S, X)$ a \bis{}. Then, for every
  span
  \begin{equation*}
    \begin{aligned}
      \begin{tikzcd}[node distance = 25mm]
        \node (S) at (0,0) {(B^+, \beta (B^+))}; \node (T) [below left
        of = S] {(A^+, \beta(A^+))}; \node (X) [below right of = S]
        {(S, X)};
        \draw[->] (S) to node[right, yshift = 2mm, xshift = -1mm] {g}
        (X); \draw[->] (S) to node[left, yshift = 2mm, xshift = 2mm]
        {f} (T);
      \end{tikzcd}
  \end{aligned}
\end{equation*}
with $f$ length preserving, the map $h = \viet(g) \circ f^* \circ i$
is a morphism of \biss{}.
\end{proposition}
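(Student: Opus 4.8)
The plan is to verify the two defining conditions of a morphism of \biss{} for $h = \viet(g)\circ f^*\circ i\colon \beta(A^+)\to\viet(X)$: that $h$ is continuous, and that it restricts to a semigroup homomorphism $A^+\to\cP_{fin}(S)$. The continuity is almost immediate. Since the morphism $f$ is completely determined by the set map $f{\upharpoonright}\colon B^+\to A^+$, the underlying continuous map is $f=\beta(f{\upharpoonright})$, which is open by Corollary~\ref{c:1}. Taking $Z=\beta(B^+)$, $Y=\beta(A^+)$ together with the maps $f$ (open) and $g$ (continuous), Corollary~\ref{c:2} then yields at once that $h$ is continuous and that $h(y)=g[f^{-1}(\{y\})]$ for every $y\in\beta(A^+)$.

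For the algebraic part, the key step---and the one point needing care---is to identify the topological fibre $f^{-1}(\{w\})$ in $\beta(B^+)$ with the finite set-theoretic fibre $(f{\upharpoonright})^{-1}(w)$ in $B^+$, for each word $w\in A^+$. I would argue this through duality: the singleton $\{w\}$ is an atom of $\cP(A^+)$, so the clopen $\widehat{\{w\}}$ of $\beta(A^+)$ is the one-point set $\{w\}$; since $f$ is dual to $(f{\upharpoonright})^{-1}\colon\cP(A^+)\to\cP(B^+)$, we get $f^{-1}(\{w\})=\widehat{(f{\upharpoonright})^{-1}(w)}$. Because $f{\upharpoonright}$ is length-preserving, Remark~\ref{r:1} tells us $(f{\upharpoonright})^{-1}(w)$ is finite, and the hat of a finite subset of $B^+$ is exactly the corresponding finite set of principal ultrafilters; hence $f^{-1}(\{w\})=(f{\upharpoonright})^{-1}(w)\subseteq B^+$. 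Consequently $h(w)=g[(f{\upharpoonright})^{-1}(w)]=(g{\upharpoonright})[(f{\upharpoonright})^{-1}(w)]$ is a finite subset of $S$, so that $h$ indeed maps $A^+$ into $\cP_{fin}(S)$.

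It remains to check that this restriction is a homomorphism, and here everything factors through the earlier results. The computation above exhibits $h{\upharpoonright}$ as the composite
\[
  h{\upharpoonright} \;=\; \cP_{fin}(g{\upharpoonright}) \circ \bigl((f{\upharpoonright})^*\circ i\bigr)\colon A^+\longrightarrow \cP_{fin}(B^+)\longrightarrow\cP_{fin}(S),
\]
where the inner $i$ is the semigroup embedding $A^+\hookrightarrow\cP(A^+)$. By Lemma~\ref{l:1}, the length-preservation of $f{\upharpoonright}$ makes $(f{\upharpoonright})^*\circ i$ a homomorphism, and Remark~\ref{r:1} guarantees it lands in $\cP_{fin}(B^+)$; on the other hand $\cP_{fin}(g{\upharpoonright})$ is a homomorphism by functoriality of the finite-power construction on semigroups. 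Being a composite of homomorphisms, $h{\upharpoonright}$ is a homomorphism, which completes the verification that $h$ is a morphism of \biss{}.

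I expect the only genuine obstacle to be the gluing step of the second paragraph: reconciling the topological forward/inverse-image description of $h$ produced by Corollary~\ref{c:2} with the purely combinatorial $\cP_{fin}$-homomorphism produced by Lemma~\ref{l:1}. Everything hinges on the fact that a length-preserving morphism has finite, hence clopen, fibres over words, so that the Stone-dual inverse image over a principal ultrafilter contains no spurious non-principal points. Once this is in hand, the topological and algebraic halves agree on $A^+$ and the statement assembles directly from the cited results.
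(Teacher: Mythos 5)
Your proof is correct and follows exactly the route the paper intends: the paper gives no written proof, merely asserting that the proposition follows from Corollary~\ref{c:2}, Lemma~\ref{l:1} and Remark~\ref{r:1}, and your argument fills in precisely those details (openness of $\beta f$ via Corollary~\ref{c:1}, continuity and the fibre formula via Corollary~\ref{c:2}, and the identification of topological with combinatorial fibres over words so that Lemma~\ref{l:1} and Remark~\ref{r:1} give the $\cP_{fin}$-homomorphism). The gluing step you flag---that the Stone-dual preimage of a principal ultrafilter is the hat of the finite set-theoretic fibre, hence consists only of principal points---is indeed the one detail the paper leaves implicit, and you handle it correctly.
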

\begin{proposition}\label{p:9}
  Let $(S, X)$ and $(T, Y)$ be \biss{} and $h: (T, Y) \to \viet(S, X)$
  a morphism. Then, there is a \bis{} $(R, Z)$ and morphisms $f:(R, Z)
  \to (T, Y)$ and $g: (R, Z) \to (S, X)$ so that $h = \viet(g) \circ
  f^* \circ i$.
\end{proposition}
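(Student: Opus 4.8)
The plan is to mimic the two topological and semigroup-theoretic
decomposition results, Proposition~\ref{p:5} and Proposition~\ref{p:4},
and then check that the two pullback constructions agree on the dense
semigroup and its ambient space so that they assemble into a single
\bis{}. Concretely, given the morphism $h: (T, Y) \to \viet(S, X)$, I
would first apply Proposition~\ref{p:5} to the underlying continuous map
$h: Y \to \viet(X)$ to obtain the compact Hausdorff space
\[
  Z = \{(y, x) \in Y \times X \mid x \in h(y)\}
\]
together with the projections $f: Z \to Y$ and $g: Z \to X$, already
known to satisfy $h = \viet(g) \circ f^* \circ i$ at the level of
spaces. Separately, since $h$ restricts to a semigroup homomorphism
$h{\upharpoonright}: T \to \cP_{fin}(S)$, I would apply
Proposition~\ref{p:4} to obtain the subsemigroup
\[
  R = \{(t, s) \mid t \in T, \ s \in h(t)\}
\]
of $T \times S$, with $h{\upharpoonright} = \cP(g) \circ f^* \circ i$
realized by the corresponding projection homomorphisms.

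The crux is to verify that $(R, Z)$ is a \bis{}, i.e. that $R$ is densely
contained in $Z$ and that the self-actions of $R$ extend to continuous
endomorphisms of $Z$. For the inclusion $R \subseteq Z$, I would note
that $S$ is dense in $X$ and $T$ is dense in $Y$, and that for
$(t,s) \in R$ we have $t \in T \subseteq Y$, $s \in h(t)$, and
$h(t) \subseteq X$ is a finite (hence closed) subset; so $(t,s) \in Z$
follows directly from the definition of $Z$ once one observes that the
topological value $h(y)$ at a point $y = t \in T$ agrees with the
algebraic value $h{\upharpoonright}(t)$ — this compatibility is exactly
what lets the two pullbacks share a common point set on the dense part.
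For density of $R$ in $Z$, the natural approach is to show that a basic
open neighborhood of a point $(y,x) \in Z$ meets $R$; here I would use
density of $T$ in $Y$ and density of $S$ in $X$ together with continuity
of $h$ to approximate $(y,x)$ by pairs $(t,s)$ with $s \in h(t)$.

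For the action-extension requirement, I would define, for each
$(t_0, s_0) \in R$, the endomorphisms of $Z$ componentwise using the
existing continuous actions $\lambda_{t_0}, \rho_{t_0}$ on $Y$ and
$\lambda_{s_0}, \rho_{s_0}$ on $X$, and check that these restrict
correctly to $R$ and preserve $Z$ as a subspace of $Y \times X$.
Preservation of $Z$ is the step that needs genuine care: one must confirm
that applying the product action sends a pair $(y,x)$ with $x \in h(y)$
to a pair still lying in $Z$, which should follow from $h$ being a
morphism of \biss{} into $\viet(S,X)$, since the actions on $\viet(X)$
are given by $\lambda_Q(C) = \bigcup_{s \in Q} \lambda_s[C]$ and hence
interact correctly with the fibrewise membership $x \in h(y)$. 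Once
$(R,Z)$ is confirmed to be a \bis{}, the projections $f$ and $g$ are
morphisms of \biss{} by construction (continuous, restricting to
homomorphisms), and the factorization $h = \viet(g) \circ f^* \circ i$
holds simultaneously at the topological and algebraic levels by
Propositions~\ref{p:5} and~\ref{p:4}.

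I expect the main obstacle to be the density of $R$ in $Z$ and the
verification that the actions preserve $Z$; both require combining the
topological density/continuity facts with the algebraic structure of the
Vietoris \bis{}, and care is needed to ensure that the finiteness built
into $\cP_{fin}(S)$ (rather than all closed subsets) is used correctly so
that the point set of $R$ really sits inside the point set of $Z$.
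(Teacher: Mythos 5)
Your decomposition is exactly the paper's: the same $Z$ and $R$, the same appeal to Propositions~\ref{p:5} and~\ref{p:4}, and the same identification of density of $R$ in $Z$ as the one remaining issue. Two remarks. First, the density argument is the single nontrivial step and you leave it entirely as a plan, with a hint that points slightly the wrong way: density of $S$ in $X$ is not used. The actual argument is: given a basic open set $V\times U$ meeting $Z$ at $(y,x)$, continuity of $h$ makes $h^{-1}(\Diamond U)\cap V$ a nonempty open subset of $Y$; density of $T$ in $Y$ gives some $t\in T$ in it; and then $h(t)\cap U\neq\emptyset$ together with $h(t)\in\cP_{fin}(S)$ (so $h(t)\subseteq S$) immediately produces the required $s\in S\cap U$ with $(t,s)\in R\cap(V\times U)$. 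No approximation of $x$ by elements of $S$ occurs; the role of $\cP_{fin}(S)$ is simply that the fibre over a nearby $t\in T$ already consists of elements of $S$. Second, the action-extension step that you flag as ``needing genuine care'' is in fact free once density is known, and this is how the paper disposes of it: $Z$ is closed in $Y\times X$ by Proposition~\ref{p:5} and $R$ is dense in $Z$, so $Z$ is the closure of $R$ in $Y\times X$; since the continuous actions of the \bis{} $(T\times S,\, Y\times X)$ map the subsemigroup $R$ into itself, they map $\overline{R}=Z$ into $Z$. Your proposed direct verification that $x\in h(y)$ implies $\lambda_{s_0}(x)\in h(\lambda_{t_0}(y))$ is a workable alternative, but it requires the intermediate identity $h\circ\lambda_{t_0}=\lambda_{h(t_0)}\circ h$ on all of $Y$ (true because it holds on $T$ and both sides are continuous into the Hausdorff space $\viet(X)$), which is itself a density-and-continuity argument; the paper's closure argument avoids this extra work.
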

\begin{proof}
  We take $Z = \{(y,x) \mid y \in Y, \ x \in h(y)\}$ and $R = \{(t,s)
  \mid t \in T, \ s \in h(s)\}$.  By Propositions~\ref{p:5}
  and~\ref{p:4} we already know that $Z$ and $R$ are, respectively, a
  Boolean space and a semigroup that do the job. Thus, it remains to
  show that $(R, Z)$ is a \bis{}. Since the pair $(T \times S, Y
  \times X)$ has a \bis{} structure, we only need to prove that $R$ is
  dense in $Z$. Let $V\subseteq Y$ and $U \subseteq X$ be open subsets
  and $(y, x) \in (V \times U) \cap Z$. We need to show that $(V
  \times U) \cap R$ is nonempty. Since $h$ is continuous,
  $h^{-1}(\Diamond U) \cap V$ is an open subset of~$Y$, and it is
  nonempty as it contains~$(y,x)$. Since $T$ is dense in~$Y$, there
  exists an element $t \in h^{-1}(\Diamond U) \cap V \cap T$. In
  particular, $h(t) \cap U \neq \emptyset$. Since $h$ restricts to a
  semigroup homomorphism $T \to \cP_{fin}(S)$, this yields the
  existence of $s \in h(t) \cap U \cap S$ as required.
\end{proof}

As a consequence we obtain the desired result on recognition.

\begin{corollary}\label{c:3}
  Let $(S, X)$ be a \bis{}. Then, a language $L \subseteq A^+$ is
  recognized by $\viet(S, X)$ if and only if it is a Boolean
  combination of forward images under lp-morphisms of languages
  recognized by~$(S, X)$.
\end{corollary}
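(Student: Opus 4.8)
The plan is to prove Corollary~\ref{c:3} by unwinding the definition of recognition by the \bis{} $\viet(S,X)$ and then applying the two preceding propositions together with Remark~\ref{r:2}. Recall that a language $L \subseteq A^+$ is recognized by $\viet(S,X) = (\cP_{fin}(S), \viet(X))$ precisely when there is a morphism of \biss{}
\[
  h: (A^+, \beta(A^+)) \to \viet(S,X) = (\cP_{fin}(S), \viet(X))
\]
and a clopen $\mathcal C \subseteq \viet(X)$ with $L = h^{-1}(\mathcal C) \cap A^+$. So the whole statement reduces to understanding which languages arise as $h^{-1}(\mathcal C) \cap A^+$ as $h$ and $\mathcal C$ range over these data.

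For the harder direction (``recognized by $\viet(S,X)$'' $\Rightarrow$ ``Boolean combination of forward images''), I would start from such a morphism $h: (A^+,\beta(A^+)) \to \viet(S,X)$ and feed it into Proposition~\ref{p:9} with $(T,Y) = (A^+,\beta(A^+))$. This produces a \bis{} $(R,Z)$ together with morphisms $f:(R,Z)\to(A^+,\beta(A^+))$ and $g:(R,Z)\to(S,X)$ such that $h = \viet(g)\circ f^* \circ i$. The key points to check are that $f$ is an lp-morphism and that $f$ is open at the topological level. For the former, the semigroup part $R$ sits inside $A^+ \times S$ and $f$ is the (restriction of the) projection to $A^+$; since $h\!\upharpoonright: A^+ \to \cP_{fin}(S)$ is a homomorphism, the analysis already used in Lemma~\ref{l:1} and Remark~\ref{r:1} shows $f$ preserves length. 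Topologically, $f = \beta(f\!\upharpoonright)$ up to the identification of $Z$ with a quotient, so Corollary~\ref{c:1} gives that $f$ is open, which is exactly the hypothesis needed to invoke Corollary~\ref{c:2}. Then Remark~\ref{r:2} tells us that $h^{-1}[\cl(\viet(X))]$ is precisely the Boolean algebra generated by the sets $f[g^{-1}(U)]$ for $U \in \cl(X)$. Intersecting with $A^+$ and recognizing each $g^{-1}(U)\cap R$ as a language recognized by $(S,X)$ (via the morphism $g$) identifies $f[g^{-1}(U)]$ as the forward image under the lp-morphism $f\!\upharpoonright$ of a language recognized by $(S,X)$, so $L$ is a Boolean combination of such forward images.

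For the converse (``Boolean combination of forward images'' $\Rightarrow$ ``recognized by $\viet(S,X)$''), I would run the preceding Proposition (the one just before Proposition~\ref{p:9}): given an lp-morphism $f$ and a language recognized by $(S,X)$ via some $g$, assemble the span of \biss{} with apex $(B^+,\beta(B^+))$ and form $h = \viet(g)\circ f^*\circ i$, which that Proposition guarantees is a morphism of \biss{} into $\viet(S,X)$. By Corollary~\ref{c:2} the preimage $h^{-1}(\Diamond U)$ equals $f[g^{-1}(U)]$, so each forward image $f[g^{-1}(U)]$ is set-theoretically recognized by $\viet(S,X)$ via a clopen of the form $\Diamond U$; since languages recognized by a fixed \bis{} via a fixed homomorphism form a Boolean algebra, any Boolean combination of these is again recognized by $\viet(S,X)$.

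The main obstacle I anticipate is bookkeeping at the seam between the two strands, namely verifying that the $f$ produced by Proposition~\ref{p:9} is simultaneously open (so that Corollary~\ref{c:2} applies on the space side) and length-preserving (so that $f[g^{-1}(U)]$ is genuinely a forward image under an lp-morphism on the semigroup side), and that these two descriptions of $f$ refer to the same map. Concretely, one must confirm that the projection $f$ arising from $(R,Z)$ restricts on the dense semigroup $R$ to a length-preserving homomorphism onto $A^+$ and that its Stone-dual openness is exactly $\beta$ of this restriction, so that Corollary~\ref{c:1} is legitimately in force. Once that compatibility is pinned down, the Boolean-algebra closure properties recorded in the preliminaries make the rest of the argument routine.
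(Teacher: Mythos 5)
Your overall architecture matches the paper's: the easy direction via the unnumbered proposition preceding Proposition~\ref{p:9} together with Corollary~\ref{c:2} and Remark~\ref{r:2}, and the hard direction by feeding $h$ into Proposition~\ref{p:9}. But in the hard direction there is a genuine gap at exactly the point you dismiss as ``bookkeeping at the seam''. Proposition~\ref{p:9} hands you a \bis{} $(R,Z)$ with $R\subseteq A^+\times S$, and you then assert that $f$ ``is an lp-morphism''. In this paper an lp-morphism is by definition a homomorphism \emph{between finitely generated free semigroups} that sends letters to letters, and likewise a ``language recognized by $(S,X)$'' is by definition a subset of some free semigroup $B^+$ cut out by a morphism $(B^+,\beta(B^+))\to(S,X)$. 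The semigroup $R$ is not free, so neither ``$f{\upharpoonright}\colon R\to A^+$ is an lp-morphism'' nor ``$g^{-1}(U)\cap R$ is a language recognized by $(S,X)$'' is meaningful as stated, whereas the conclusion of the corollary is literally about free semigroups. The paper closes this gap with the one substantive observation of its proof: $R$ is generated, as a semigroup, by the \emph{finite} alphabet $B=\{(a,s)\mid a\in A,\ s\in h(a)\}$ (finite precisely because $h$ lands in $\cP_{fin}(S)$ and $A$ is finite), whence there is a unique morphism of \biss{} $\pi\colon(B^+,\beta(B^+))\to(R,Z)$ sending each letter $b\in B$ to itself, $f\circ\pi$ restricts to a genuine lp-morphism $B^+\to A^+$, and $g\circ\pi$ recognizes the relevant languages over $B^+$; Remark~\ref{r:2} is then applied to the span with apex $(B^+,\beta(B^+))$. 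Your proposal never performs this replacement of $(R,Z)$ by a free apex, and without it the forward direction does not produce objects of the kind the statement asks for.

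A secondary point: the openness of $f$ on which you spend effort is not actually needed in the hard direction. Proposition~\ref{p:9} already delivers the factorization $h=\viet(g)\circ f^*\circ i$ with $h$ continuous, and the identity $h^{-1}(\Diamond U)=f[g^{-1}(U)]$ is a purely set-theoretic consequence of the adjunction between forward and inverse image. Openness (via Corollary~\ref{c:1}) matters only when one must \emph{build} a continuous $h$ from a span, i.e.\ in the easy direction, where your argument agrees with the paper's; note, though, that your closing appeal to ``languages recognized via a fixed homomorphism form a Boolean algebra'' only covers Boolean combinations of forward images arising from a single span, the same (unaddressed) terseness present in the paper's own treatment of that implication.
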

\begin{proof}
  The backwards implication is a trivial consequence of
  Remark~\ref{r:2} and Proposition~\ref{p:9}.
  Conversely, let $h: (A^+, \beta(A^+)) \to \viet(S, X)$ be a morphism
  recognizing $L \subseteq A^+$. Again by Proposition~\ref{p:9}, there
  exists a \bis{} $(R,Z)$ and morphisms $f:(R, Z) \to (A^+,
  \beta(A^+))$ and $g: (R, Z) \to (S, X)$ so that $h = \viet(g) \circ
  f^* \circ i$. The only thing to notice is that $R = \{(u, s) \mid u
  \in A^+, s \in h(u)\}$ is the subsemigroup of $A^+ \times S$
  generated by the finite alphabet $B = \{(a, s) \mid a \in A, \ s \in
  h(a)\}$. Therefore, we have a unique morphism of \biss{} $\pi: (B^+,
  \beta(B^+)) \to (R, Z)$ mapping $b \in B$ to $b \in R$, and this
  morphism is such that $f \circ \pi$ is length-preserving. The
  intended conclusion follows then from Remark~\ref{r:2}.
\end{proof}

We remark that this result takes care of the first stage of
set-theoretic recognition in the first-order
setting~\cite{GehrkePetrisanReggio16,
  BorlidoCzarnetzkiGehrkeKrebs17}. The complication in the first-order
setting stems from the fact that the first-order models do not form a
semigroup. Here we treat the considerable easier case of monadic
second-order quantifiers.
\paragraph{Monadic second-order existential quantification.}

We address the following questions: Given a \bis{} $(S, X)$, which
\bis{} recognizes the Boolean algebra generated by the languages of
the form $L_{\exists N}$ where $L$ is recognized by $(S, X)$?  Does it
recognize much more?  Unlike in the first-order case where an
iterative construction is absolutely needed, we will see that, for
second-order quantification, taking once the power of $(S, X)$ is
enough to recognize every language $L_{\exists N}$ for every $N \in
\mathbb N$, where $L$ is recognized by~$(S, X)$. In fact, since the
projection $\pi_N: (A\times 2^N)^+ \twoheadrightarrow A^+$ modeling
the quantifier $\exists_N$ is length-preserving, this essentially
follows from the results above. As already mentioned, the
corresponding problem for first-order quantifiers was considered
in~\cite{GehrkePetrisanReggio16, BorlidoCzarnetzkiGehrkeKrebs17} and
it is much more delicate, as the universe of models of formulas with
free first-order variables does not admit a semigroup structure.

\begin{proposition}\label{p:8}
  Let $(S, X)$ be a \bis{}. If $(S, X)$ recognizes the language $L
  \subseteq (A \times 2^N)^+$, then $\viet(S, X)$ recognizes the
  language $L_{\exists_N} \subseteq A^+$. Conversely, if $K \subseteq
  A^+$ if recognized by $\viet(S, X)$, then there exists a positive
  integer $N$, an alphabet $A' \subseteq A$ and a language $L
  \subseteq (A' \times 2^N)^+$ recognized by $(S, X)$ such that $K =
  L_{\exists_N}$.
\end{proposition}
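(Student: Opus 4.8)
The plan is to prove the two directions of Proposition~\ref{p:8} by reducing each to the general recognition result in Corollary~\ref{c:3}, using the fact that the projection $\pi_N \colon (A \times 2^N)^+ \twoheadrightarrow A^+$ is an lp-morphism and that forward image under $\pi_N$ is exactly the quantification operation, i.e. $L_{\exists_N} = \pi_N[L]$.

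For the forward direction, suppose $(S,X)$ recognizes $L \subseteq (A \times 2^N)^+$. By definition there is a morphism $k \colon ((A\times 2^N)^+, \beta((A\times 2^N)^+)) \to (S,X)$ and a clopen $C \subseteq X$ with $L = k^{-1}(C) \cap (A \times 2^N)^+$. Since $\pi_N$ is an lp-morphism, the span consisting of $\pi_N$ (in the role of $f$) and $k$ (in the role of $g$) satisfies the hypotheses of the first Proposition of this section, so $h = \viet(k) \circ \pi_N^* \circ i$ is a morphism of \biss{} from $(A^+, \beta(A^+))$ to $\viet(S,X)$. By Remark~\ref{r:2}, the set $\pi_N[k^{-1}(C)] = h^{-1}(\Diamond C)$, and since $\Diamond C$ is clopen in $\viet(X)$ when $C$ is clopen in $X$, this exhibits $L_{\exists_N} = \pi_N[L]$ as recognized by $\viet(S,X)$. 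The only point to check carefully is that forward image commutes with the intersection with $A^+$ in the right way, i.e. that $\pi_N[k^{-1}(C) \cap (A\times 2^N)^+] = \pi_N[k^{-1}(C)] \cap A^+$; this holds because $\pi_N$ maps $(A\times 2^N)^+$ onto $A^+$ and, being length-preserving, restricts correctly, so the set-theoretic forward image over the dense subsemigroup matches the topological $h^{-1}(\Diamond C)$.

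For the converse, suppose $K \subseteq A^+$ is recognized by $\viet(S,X)$. By Corollary~\ref{c:3}, $K$ is a Boolean combination of forward images under lp-morphisms of languages recognized by $(S,X)$; the task is to realize a single such building block (and then the whole Boolean combination) as an $L_{\exists_N}$ for a suitable $N$. The key observation is that an arbitrary lp-morphism $\phi \colon B^+ \to A'^+$ with $A' \subseteq A$ can be encoded by the canonical projection $\pi_N$ for $N$ chosen large enough that $2^N \geq |B|$: one fixes an injection of $B$ into $A' \times 2^N$ compatible with $\phi$ (sending $b \in B$ to $(\phi(b), \sigma(b))$ for an injective tag $\sigma \colon B \hookrightarrow 2^N$), which identifies $B^+$ with a subsemigroup of $(A' \times 2^N)^+$ and makes $\phi$ the restriction of $\pi_N$. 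Then a language recognized by $(S,X)$ and its forward image under $\phi$ become a language $L \subseteq (A' \times 2^N)^+$ (recognized by $(S,X)$) with $\phi[L] = L_{\exists_N}$.

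The main obstacle will be the bookkeeping in this converse direction: one must handle a \emph{Boolean combination} of such forward images, each potentially coming from a different lp-morphism with a different domain alphabet, and consolidate them into a single value of $N$ and a single language $L$ over $(A' \times 2^N)^+$. The plan is to take $N$ uniformly large enough to encode all the relevant domain alphabets simultaneously via disjoint tag-blocks, then use that both the class of languages recognized by $(S,X)$ and the quantification operation interact predictably with Boolean operations — in particular, that $\exists_N$ distributes over unions and that the recognized languages form a Boolean algebra — to push the Boolean combination inside. A secondary subtlety is justifying the reduction to $A' \subseteq A$ rather than all of $A$: one restricts attention to the letters actually occurring, which is forced by the finiteness of the alphabets involved. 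I expect the structural content to be light once the encoding is set up correctly, with essentially all the difficulty concentrated in choosing this encoding so that it is simultaneously injective, length-preserving, and compatible with the Boolean structure.
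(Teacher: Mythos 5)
Your forward direction is essentially the paper's: $\pi_N$ is an lp-morphism and $L_{\exists_N}=\pi_N[L]$, so the first half follows from Corollary~\ref{c:3} (or directly from the span construction and Remark~\ref{r:2}). The genuine problem is in the converse. You start from the \emph{statement} of Corollary~\ref{c:3}, which only gives that $K$ is a \emph{Boolean combination} of forward images under various lp-morphisms, and you then propose to ``push the Boolean combination inside'' a single $\exists_N$. This cannot work in general: forward image distributes over unions but not over intersections or complements (the complement of $\phi[L]$ is a universally quantified condition on the fibres of $\phi$, not an existentially quantified one), and the class of languages of the form $\phi[L]$ with $L$ recognized by $(S,X)$ is not closed under complementation. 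So the step you dismiss as ``bookkeeping'' is actually the crux, and your plan has no mechanism for it. The paper avoids the issue entirely by invoking the \emph{proof} of Corollary~\ref{c:3} rather than its statement: the morphism $h$ recognizing $K$ factors through a single span built on the finite alphabet $B=\{(a,s)\mid a\in A,\ s\in h(a)\}\subseteq A\times S$, with $f\colon B^+\to A^+$ the restriction of the projection, and this yields $K=f[L]$ for a \emph{single} language $L\subseteq B^+$ recognized by $(S,X)$. The concrete shape of $B$ as a subset of $A\times S$ is then exactly what makes the encoding work: choosing $N$ so large that each fibre $B\cap(\{a\}\times S)$ injects into $2^N$ gives a \emph{surjective} homomorphism $\pi\colon(A'\times 2^N)^+\twoheadrightarrow B^+$ with $f\circ\pi=\pi_N$, and one takes $L'=\pi^{-1}(L)$.

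A second, related gap is in your encoding of an lp-morphism $\phi\colon B^+\to (A')^+$ by an injective tagging $b\mapsto(\phi(b),\sigma(b))$. This identifies $L$ with a language over a \emph{sub}alphabet of $A'\times 2^N$, and, as the Remark following Proposition~\ref{p:8} points out, recognition by $(S,X)$ over a subalphabet does not imply recognition over the full alphabet; you would need extra recognizing power (e.g.\ $S\times\mathbf{2}$) to cut down to the tagged words. The paper's use of a surjective $\pi$ and the inverse image $L'=\pi^{-1}(L)$ sidesteps this entirely, since inverse images under homomorphisms always preserve recognition and $\pi_N[\pi^{-1}(L)]=f[\pi[\pi^{-1}(L)]]=f[L]$ by surjectivity of $\pi$.
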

\begin{proof}
  Since $L_{\exists_N} = \pi_N[L]$ and $\pi_N$ is an lp-morphism, the
  first part is a consequence of Corollary~\ref{c:3}.
  Conversely, let $K \subseteq A^+$ be a language recognized by
  $\viet(S, X)$. By the proof of Corollary~\ref{c:3}, there is a
  finite alphabet $B \subseteq A\times S$ such that $K = f[L]$ for a
  language $L \subseteq B^+$ recognized by $(S, X)$, where $f: B \to
  A$ is the restriction of the projection $A \times S \to A$. Take $A'
  = f[B]$ and choose a big enough~$N$ so that each of the sets $B \cap
  (\{a\} \times S)$, with $a \in A'$, has at most~$N$ elements. Then,
  there exists an onto homomorphism $\pi: (A' \times 2^N)^+
  \twoheadrightarrow B^+$ satisfying $\pi_N = f \circ \pi$, and so,
  there is a language $L' = \pi^{-1}(L) \subseteq (A' \times 2^N)^+$
  recognized by $(S, X)$ and such that $K = L'_{\exists_N}$.
\end{proof}

\begin{remark}
  Observe that, if $B' \subseteq B$ is an inclusion of alphabets, then
  a language $L \subseteq (B')^+$ can always be seen as a language
  over $B$ via the inclusion $(B')^+ \subseteq B^+$. Nevertheless, the
  fact that $L$ is recognized by $(S, X)$ as a language over $B'$ does
  not necessarily implies that it is recognized by $(S, X)$ as a
  language over $B$. On the other hand, the \bis{} $(S \times {\bf 2},
  X \times \{0,1\})$, where ${\bf 2}$ denotes the two-element
  semilattice, does recognize $L$ as a language over~$B$. Since one is
  usually interested in studying fragments of logic and not a single
  formula, we can always express the property ``every letter belongs
  to $B'$'', and so, this is not really a constraint.
\end{remark}

Notice that, Proposition~\ref{p:8} implies that a language is
recognized by the power of an aperiodic semigroup if and only if it is
obtained by second-order quantification of a language recognized by
aperiodic semigroups. On the other hand, the languages recognizable by
an aperiodic semigroup are precisely those definable in ${\bf
  FO}[<]$~\cite{Schutzenberger65,McNaughtonPapert71}, and in turn, the
second-order existential quantification of those yields ${\bf
  MSO}[<]$, a fragment of logic defining precisely the regular
languages~\cite{Buchi60,Elgot61} (i.e., those recognized by a finite
semigroup). We may thus derive the following:

\begin{corollary}
  Every finite semigroup divides a power of an aperiodic one.
\end{corollary}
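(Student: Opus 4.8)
The essential input is the chain of equivalences assembled in the paragraph preceding the statement: combining Proposition~\ref{p:8} with the theorems of Sch\"utzenberger--McNaughton--Papert and of B\"uchi--Elgot, the languages recognized by the power $\cP(A)$ of an aperiodic semigroup~$A$ are \emph{precisely} the regular languages, that is, precisely those recognized by a finite semigroup. The plan is to transport this equality of classes of \emph{languages} into the required statement about \emph{division} of semigroups (write $S\prec T$ when $S$ is a quotient of a subsemigroup of~$T$). The bridge is the standard fact that, for a finite semigroup~$S$, one has $S\prec T$ as soon as $T$ recognizes the surjective evaluation homomorphism $\mu\colon S^+\to S$ on the alphabet $A_0=S$ \emph{by a single homomorphism}: if $\Phi\colon S^+\to T$ satisfies $\ker\Phi\subseteq\ker\mu$, then $\mu$ factors through the subsemigroup $\Phi[S^+]\le T$ and exhibits $S$ as a quotient of it. Thus it suffices to produce an aperiodic semigroup~$A$ for which $\cP(A)$ recognizes~$\mu$.

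Now each fibre $\mu^{-1}(s)$, $s\in S$, is a regular language, so by the equality above it is recognized by $\cP(B_s)$ for some aperiodic~$B_s$, say via $\phi_s\colon S^+\to\cP(B_s)$. The remaining task is to \emph{amalgamate} these finitely many recognitions into one. For this I would use two stability properties on the semigroup side. First, a finite direct product of aperiodic semigroups is aperiodic (a common exponent~$n$ with $x^{n}=x^{n+1}$ is the maximum of the componentwise ones). Second, for any family of semigroups there is a homomorphic embedding
\[
  \prod_{s}\cP(B_s)\hookrightarrow\cP\Big(\prod_{s}B_s\Big),\qquad (Q_s)_s\mapsto\prod_s Q_s,
\]
which is a homomorphism because subset multiplication is computed componentwise. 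Setting $A=\prod_{s}B_s$, the product homomorphism $\langle\phi_s\rangle_s\colon S^+\to\prod_s\cP(B_s)$ followed by this embedding yields a homomorphism $\Phi\colon S^+\to\cP(A)$ that recognizes every fibre $\mu^{-1}(s)$ at once. As distinct fibres are separated by~$\Phi$, we get $\ker\Phi\subseteq\ker\mu$, and hence $S\prec\cP(A)$ with~$A$ aperiodic, as desired.

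I expect the amalgamation to be the crux: the language-level equality only recognizes each fibre \emph{separately}, possibly by powers of different aperiodic semigroups, and the content of the proof is that the displayed embedding, together with the aperiodicity of products, collapses these into a single recognizer. Conceptually this is the statement that the class $\{S\mid S\prec\cP(A)\text{ for some aperiodic }A\}$ is a pseudovariety --- closed under division trivially and under finite products via the embedding above --- whose corresponding variety of languages is exactly the regular languages; Eilenberg's correspondence then identifies it with the pseudovariety of all finite semigroups, which is another way to read the conclusion. One routine technical point deserves mention: the displayed embedding is injective only on tuples of nonempty sets, since an empty component forces the product to be empty. This is harmless, because precomposing each~$\phi_s$ with the injective homomorphism $\cP(B_s)\hookrightarrow\cP(B_s\sqcup\{0\})$, $Q\mapsto Q\cup\{0\}$ (where $0$ is an adjoined zero, which preserves aperiodicity) makes all images nonempty while changing nothing relevant.
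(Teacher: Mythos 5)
Your proof is correct and follows the paper's own route: the corollary is derived from Proposition~\ref{p:8} together with the Sch\"utzenberger--McNaughton--Papert and B\"uchi--Elgot theorems, which is exactly the chain of equivalences you take as input. The paper leaves the passage from the language-level statement to division of semigroups entirely implicit (``We may thus derive\dots''), and your amalgamation of the fibre recognizers of the evaluation morphism $S^+\to S$ --- including the empty-set caveat for the embedding $\prod_s\cP(B_s)\hookrightarrow\cP(\prod_s B_s)$ --- correctly supplies that standard bridge.
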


\footnotesize
\bibliographystyle{plain}

\end{document}